\newcommand{\salt}{\vspace*{2.5mm}}     
\def\boxit#1{\vbox{\hrule\hbox{\vrule\kern2pt
 \vbox{\kern4pt#1\kern2pt}\kern2pt\vrule}\hrule}}
\def\qed{\hfill
 $\hskip0.3cm
 \boxit{\hsize 2pt \vsize 10pt}$\bigskip\noindent}
\def\C{{\mathchoice {\setbox0=\hbox{$\displaystyle\rm C$}\hbox{\hbox
to0pt{\kern0.4\wd0\vrule height0.9\ht0\hss}\box0}}
{\setbox0=\hbox{$\textstyle\rm C$}\hbox{\hbox
to0pt{\kern0.4\wd0\vrule height0.9\ht0\hss}\box0}}
{\setbox0=\hbox{$\scriptstyle\rm C$}\hbox{\hbox
to0pt{\kern0.4\wd0\vrule height0.9\ht0\hss}\box0}}
{\setbox0=\hbox{$\scriptscriptstyle\rm C$}\hbox{\hbox
to0pt{\kern0.4\wd0\vrule height0.9\ht0\hss}\box0}}}}
\def\Q{{\mathchoice {\setbox0=\hbox{$\displaystyle\rm
Q$}\hbox{\raise
0.15\ht0\hbox to0pt{\kern0.4\wd0\vrule height0.8\ht0\hss}\box0}}
{\setbox0=\hbox{$\textstyle\rm Q$}\hbox{\raise
0.15\ht0\hbox to0pt{\kern0.4\wd0\vrule height0.8\ht0\hss}\box0}}
{\setbox0=\hbox{$\scriptstyle\rm Q$}\hbox{\raise
0.15\ht0\hbox to0pt{\kern0.4\wd0\vrule height0.7\ht0\hss}\box0}}
{\setbox0=\hbox{$\scriptscriptstyle\rm Q$}\hbox{\raise
0.15\ht0\hbox to0pt{\kern0.4\wd0\vrule height0.7\ht0\hss}\box0}}}}
\def\T{{\mathchoice {\setbox0=\hbox{$\displaystyle\rm
T$}\hbox{\hbox to0pt{\kern0.3\wd0\vrule height0.9\ht0\hss}\box0}}
{\setbox0=\hbox{$\textstyle\rm T$}\hbox{\hbox
to0pt{\kern0.3\wd0\vrule height0.9\ht0\hss}\box0}}
{\setbox0=\hbox{$\scriptstyle\rm T$}\hbox{\hbox
to0pt{\kern0.3\wd0\vrule height0.9\ht0\hss}\box0}}
{\setbox0=\hbox{$\scriptscriptstyle\rm T$}\hbox{\hbox
to0pt{\kern0.3\wd0\vrule height0.9\ht0\hss}\box0}}}}
\def\Z{{\mathchoice {\hbox{$\sf\textstyle Z\kern-0.4em Z$}}
{\hbox{$\sf\textstyle Z\kern-0.4em Z$}}
{\hbox{$\sf\scriptstyle Z\kern-0.3em Z$}}
{\hbox{$\sf\scriptscriptstyle Z\kern-0.2em Z$}}}}
\newcommand{\eqnoone}  
   {}
\newcommand{\eqnotwo}  
   {}
\newcounter{alf}
\newcommand{\adresa}[1]{\par\vspace*{-11pt}
                        \begin{flushright}
                        {\small
                        #1}
                        \end{flushright}
                        }
\def\Z{\Bbb Z}
\def\C{\Bbb C}
\newtheorem{theorem}{Theorem}
\theoremstyle{plain}
\newtheorem{corollary}{Corollary}
\newtheorem{lemma}{Lemma}
\numberwithin{equation}{section}
\begin{document}

{\footnotesize%
\hfill
\begin{tabular}{c}
Bull. Math. Soc. Sci. Math. Roumanie\\
Tome 6x (10x) No. x, 201x, xx--yy
\end{tabular}}

  \vskip 1.2 true cm
\setcounter{page}{1x}

\begin{center} {\bf A Note on Matrix Valued Truncated Toeplitz Operators  } \\ 
          {by}\\
{\sc Rewayat Khan}
\end{center}

\pagestyle{myheadings}
\markboth{A Note on Matrix Valued Truncated Toeplitz Operators}{Rewayat Khan}

\begin{abstract}
	We prove a spatial isomorphism between two spaces of matrix valued truncated Toeplitz operators.
\end{abstract}

\begin{quotation}
\noindent{\bf Key Words}: {Inner function, Hardy-Hilbert space, Matrix valued truncated Toeplitz operators.}

\noindent{\bf 2010 Mathematics Subject Classification}:   Primary 47Bxx; 47Axx. Secondary 30Jxx.
\end{quotation}

\thispagestyle{empty}

\section{Introduction}\label{Sec1}

Toeplitz operators are compressions of the multiplication operators on $L^{2}(\mathbb{T})$ to its closed subspace $H^{2}$. In the canonical basis of $H^{2}$, the matrix of a Toeplitz operator has constant entries on diagonals   parallel to the main diagonal.

In 2007, Donald Sarason has introduced the truncated Toeplitz operators (see \cite{Sa}), which have attracted so much attention in the last decade. Truncated Toeplitz operators are compressions of multiplication operators to certain subspaces of the Hardy-Hilbert space called   model spaces, which are invariant under the backward shift operator.

For $E$ a $d$-dimensional Hilbert space and a matrix valued inner function $\Theta$, it makes sense to consider $S^{*}$-invariant subspaces in $ H^2(E) $ These spaces, denoted $K_{\Theta}= H^{2}(E)\ominus\Theta H^{2}(E)$, are called vector valued model spaces.
In the paper \cite{KT}, we have developed the theory of matrix valued truncated Toeplitz operators, which are compressions of
multiplication operator on $K_{\Theta}$.

The purpose of this note is to prove a natural spatial isomorphism between two spaces of matrix valued truncated Toeplitz operators. This completes the results of~\cite{KT}.

\section{Preliminaries}

Let $\mathbb{D}$ denote the open unit disc and $\mathbb{T}$ the unit circle in the complex plane $\mathbb{C}$. Throughout the paper $E$ will denote a fixed Hilbert space of finite dimension $d$, and $\mathcal{L}(E)$ the algebra of bounded linear operators on $E$, which may be identified with $d\times d$ matrices. $\mathcal{L}(E)$ is a Hilbert space endowed with Hilbert-Schmidt norm.  $H^{2}(E)$ is the Hardy-Hilbert space of $E$-valued analytic functions on $\mathbb{D}$ whose coefficients are square summable; it is a closed subspace of $L^{2}(E)$, which is the space of all functions defined on the unit circle that have square summable convergent series.

The Poisson integral formula can be used to provide the
analytic extension of a function in $H^{2}(E)$. It
follows that $f(z)$ and $f(e^{it})$ determine each other.

 $L^{2}(\mathcal{L}(E))$ is the space of square summable Fourier series with coefficients in $\mathcal{L}(E)$.
The space $H^{2}(\mathcal{L}(E))$ is a closed subspace of $L^{2}(\mathcal{L}(E))$ whose Fourier coefficients corresponding to negative indices vanishes. The space of bounded operator valued function in $H^{2}(\mathcal{L}(E))$ is denoted by $H^{\infty}(\mathcal{L}(E)).$\par


The unilateral shift $S:H^{2}(E)\longrightarrow H^{2}(E)$ is defined by $Sf=zf$, and its adjoint $S^{*}$ (backward shift) is given by the formula;
$S^{*}f=\frac{f-f(0)}{z}.$

An inner function is an element $\Theta\in H^{\infty}(\mathcal{L}(E))$ whose boundary values are
almost everywhere unitary operators in $\mathcal{L}(E)$.

 To each non-constant inner function $\Theta$, there corresponds a model space $K_{\Theta}$ defined by
$$K_{\Theta}=H^{2}(E)\ominus \Theta H^{2}(E).$$
This terminology stems from the important role that $K_{\Theta}$ plays in the model theory of Hilbert space contractions.
Just like the Beurling-type subspace $\Theta H^{2}(E)$ constitute nontrivial invariant subspace for the unilateral shift $S$, the subspace $K_{\Theta}$ plays an analogous role for the backward shift $S^{*}$.

Let $P_{\Theta}$ denote the orthogonal projection of $H^{2}(E)$ on the $S^{*}$-invariant subspace $K_{\Theta}$. We then define the operator $S_{\Theta}$ on $K_{\Theta}$ by
$S_{\Theta}f=P_{\Theta}(zf)~~~ \forall f\in K_{\Theta}.$
Its adjoint is $S_{\Theta}^{*}=S^{*}|_{K_{\Theta}}.$
 If $\Theta\in H^{\infty}(\mathcal{L}(E))$ is an inner function, we define the new function by $\widetilde{\Theta}(z)=\Theta(\overline{z})^{*}$, then $\widetilde{\Theta}$ is also an inner function, and the corresponding model space is denoted by $K_{\widetilde{\Theta}}$.
 \begin{theorem} \cite{Fu}\label{eq: def of tau}
The operator $\tau: L^{2}(E)\longrightarrow L^{2}(E)$ defined by
\begin{equation}
(\tau f)(e^{it})=e^{-it}\Theta(e^{-it})^{*}f(e^{-it}),
\end{equation}
 is unitary and $\tau (K_{\Theta})=K_{\widetilde{\Theta}}$.
\end{theorem}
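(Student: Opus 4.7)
My plan is to split the statement into three parts: isometry, surjectivity (hence unitarity), and the model-space correspondence, treating each separately and using the orthogonal decomposition of $L^2(E)$ at the end.

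First I would verify that $\tau$ is an isometry by a pointwise argument. Because $\Theta$ is inner, $\Theta(e^{-it})^{*}$ is a unitary operator on $E$ for a.e.\ $t$, and the scalar $e^{-it}$ is unimodular, so the operator $e^{-it}\Theta(e^{-it})^{*}$ is unitary on $E$ for a.e.\ $t$. Hence $\|(\tau f)(e^{it})\|_{E}=\|f(e^{-it})\|_{E}$, and the change of variable $t\mapsto -t$ (under which normalized Lebesgue measure is invariant) gives $\|\tau f\|_{L^{2}(E)}=\|f\|_{L^{2}(E)}$. For surjectivity I would exhibit an explicit inverse: the analogous operator $\widetilde{\tau}$ built from $\widetilde{\Theta}$, namely $(\widetilde{\tau}g)(e^{it})=e^{-it}\widetilde{\Theta}(e^{-it})^{*}g(e^{-it})=e^{-it}\Theta(e^{it})g(e^{-it})$. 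A direct one-line computation using $\Theta(e^{it})\Theta(e^{it})^{*}=I$ a.e.\ shows $\widetilde{\tau}\circ\tau=I$ and similarly $\tau\circ\widetilde{\tau}=I$, so $\tau$ is unitary.

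For the identification $\tau(K_{\Theta})=K_{\widetilde{\Theta}}$ I would use the orthogonal decompositions
\begin{equation*}
L^{2}(E)=\overline{zH^{2}(E)}\oplus K_{\Theta}\oplus\Theta H^{2}(E)=\overline{zH^{2}(E)}\oplus K_{\widetilde{\Theta}}\oplus\widetilde{\Theta}H^{2}(E),
\end{equation*}
and show that $\tau$ carries the two ``outer'' summands of the first decomposition onto the two ``outer'' summands of the second; unitarity then forces $\tau(K_{\Theta})=K_{\widetilde{\Theta}}$. Concretely, for $g\in H^{2}(E)$ the inner relation $\Theta(e^{-it})^{*}\Theta(e^{-it})=I$ a.e.\ reduces $\tau(\Theta g)$ to $e^{-it}g(e^{-it})$; expanding $g$ in a Fourier series shows this lands in $\overline{zH^{2}(E)}$ and that the resulting map $H^{2}(E)\to\overline{zH^{2}(E)}$ is a bijection. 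Hence $\tau(\Theta H^{2}(E))=\overline{zH^{2}(E)}$.

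Symmetrically, for $h\in\overline{zH^{2}(E)}$ a Fourier-coefficient check shows $e^{-it}h(e^{-it})\in H^{2}(E)$, and since $\Theta(e^{-it})^{*}=\widetilde{\Theta}(e^{it})$, one reads off
\begin{equation*}
(\tau h)(e^{it})=\widetilde{\Theta}(e^{it})\bigl[e^{-it}h(e^{-it})\bigr]\in\widetilde{\Theta}H^{2}(E).
\end{equation*}
Inverting this map (solving $e^{-it}h(e^{-it})=k(e^{it})$ for $h$ given $k\in H^{2}(E)$) shows the inclusion is an equality, so $\tau(\overline{zH^{2}(E)})=\widetilde{\Theta}H^{2}(E)$. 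Combined with unitarity, the middle summands must match, giving $\tau(K_{\Theta})=K_{\widetilde{\Theta}}$.

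I expect the only real subtlety to be keeping track of the non-commutativity in the matrix-valued setting, i.e., making sure that $\widetilde{\Theta}(e^{it})=\Theta(e^{-it})^{*}$ is applied on the correct side so that the factorization $\tau h=\widetilde{\Theta}\cdot(\text{something in }H^{2}(E))$ is genuinely of the form $\widetilde{\Theta}H^{2}(E)$; everything else is essentially the scalar argument with unitary matrices replacing unimodular constants.
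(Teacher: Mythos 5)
The paper states this result only as a citation to Fuhrmann \cite{Fu} and supplies no proof of its own, so there is nothing internal to compare against; your argument is correct and is essentially the standard one. The pointwise unitarity of $e^{-it}\Theta(e^{-it})^{*}$ gives the isometry, your $\widetilde{\tau}$ is indeed a two-sided inverse (using $\widetilde{\Theta}(e^{-it})^{*}=\Theta(e^{it})$ and $\Theta\Theta^{*}=I$ a.e.), and the three-block decomposition $L^{2}(E)=\bigl(L^{2}(E)\ominus H^{2}(E)\bigr)\oplus K_{\Theta}\oplus\Theta H^{2}(E)$ together with the two computations $\tau\bigl(\Theta H^{2}(E)\bigr)=L^{2}(E)\ominus H^{2}(E)$ and $\tau\bigl(L^{2}(E)\ominus H^{2}(E)\bigr)=\widetilde{\Theta}H^{2}(E)$ correctly forces $\tau(K_{\Theta})=K_{\widetilde{\Theta}}$ by unitarity (your care about left multiplication by $\widetilde{\Theta}(e^{it})$ is exactly the right point to watch in the matrix-valued setting).
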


The operator $ \tau $ plays an important role in computations related to $K_{\Theta}$. For instance, it is used in the proof of the next result.

\begin{theorem}[see \cite{Fu}]
The operator $S_{\Theta}$ and $S_{\widetilde{\Theta}}^{*}$ are unitarily equivalent.
\end{theorem}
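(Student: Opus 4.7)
The plan is to exhibit an explicit unitary intertwining $S_\Theta$ with $S_{\widetilde\Theta}^*$, and the natural candidate, hinted at by the placement of Theorem~\ref{eq: def of tau} immediately before this statement, is $U := \tau|_{K_\Theta} \colon K_\Theta \to K_{\widetilde\Theta}$. Since $\tau$ is already known to be unitary and to map $K_\Theta$ onto $K_{\widetilde\Theta}$, the remaining task is to verify the operator identity $\tau S_\Theta = S_{\widetilde\Theta}^* \tau$ on $K_\Theta$.

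First I would establish the commutation relation
\[
\tau M_z = M_{\bar z}\,\tau \qquad \text{on } L^2(E),
\]
where $M_z$ and $M_{\bar z}$ denote multiplication by $e^{it}$ and $e^{-it}$, respectively. This is a direct calculation from the defining formula for $\tau$: both sides send $f$ to $e^{-2it}\Theta(e^{-it})^* f(e^{-it})$. So $\tau$ converts the forward shift on $L^2(E)$ into the backward shift.

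Next I would recast both $S_\Theta$ and $S_{\widetilde\Theta}^*$ as compressions at the $L^2$-level. Writing $P_{K_\Theta}$ for the $L^2(E)$-orthogonal projection onto $K_\Theta$ (which agrees with $P_\Theta$ on vectors already in $H^2(E)$), we have $S_\Theta f = P_{K_\Theta} M_z f$ for $f \in K_\Theta$. For $f \in K_{\widetilde\Theta}$, the function $M_{\bar z} f$ equals $S^*f + \bar z\, f(0)$, and since $\bar z\, f(0)$ lies in $H^2(E)^\perp$ and is therefore orthogonal to $K_{\widetilde\Theta}$, we obtain $S_{\widetilde\Theta}^* f = P_{K_{\widetilde\Theta}} M_{\bar z} f$. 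This identification is the most subtle point of the argument, since $S_{\widetilde\Theta}^*$ is defined as a \emph{restriction} and must be recognized as a \emph{compression} for the intertwining to proceed uniformly.

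The conclusion is then a short chain of equalities. Because $\tau$ is unitary on $L^2(E)$ with $\tau(K_\Theta) = K_{\widetilde\Theta}$, and hence $\tau(K_\Theta^\perp) = K_{\widetilde\Theta}^\perp$, one has $\tau P_{K_\Theta} = P_{K_{\widetilde\Theta}} \tau$. Combining the three ingredients, for every $f \in K_\Theta$,
\[
\tau S_\Theta f \;=\; \tau P_{K_\Theta} M_z f \;=\; P_{K_{\widetilde\Theta}} \tau M_z f \;=\; P_{K_{\widetilde\Theta}} M_{\bar z} \tau f \;=\; S_{\widetilde\Theta}^* \tau f,
\]
which is the desired intertwining. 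The main obstacle is not any individual computation but the careful bookkeeping between the $L^2$-projections, the $H^2$-projections, and the distinct definitions of $S_\Theta$ and $S_{\widetilde\Theta}^*$; once the two operators are expressed in the common compression form, the commutation $\tau M_z = M_{\bar z}\tau$ together with the projection intertwining finishes the argument.
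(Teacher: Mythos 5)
Your proof is correct, and it follows exactly the route the paper intends: the unitary is $\tau$ from Theorem~1, the key identity is the intertwining $\tau S_\Theta = S_{\widetilde\Theta}^{*}\tau$ (which the paper itself invokes later in the Main Result), and the verification via $\tau M_z = M_{\bar z}\tau$ together with $\tau P_{K_\Theta}=P_{K_{\widetilde\Theta}}\tau$ is the standard argument from Fuhrmann's paper, which the text cites in place of a proof. The care you take in recognizing $S_{\widetilde\Theta}^{*}$ as the compression $P_{K_{\widetilde\Theta}}M_{\bar z}|_{K_{\widetilde\Theta}}$ is exactly the point that makes the chain of equalities legitimate.
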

For further use we note that the adjoint of $\tau$ is given by
\begin{equation}
(\tau^{*}f)(e^{it})=e^{-it}\widetilde{\Theta}(e^{-it})^{*}f(e^{-it})
 \end{equation}
and satisfies $ \tau P_{\Theta}=P_{\widetilde{\Theta}}\tau$.

 \begin{lemma}\cite{KT}
 The subsets
 $$D=\bigg{\{(I-\Theta(z)\Theta(0)^{*})x: x\in E}\bigg\}, \quad D_{*}=\bigg{\{\frac{1}{z}(\Theta(z)-\Theta(0))x:x\in E}\bigg\},$$
are subspaces of $K_{\Theta}$ of dimension equal to the dimension of $E$.
 \end{lemma}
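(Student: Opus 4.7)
\textit{Plan.} The claim has three ingredients---$D, D_{*}$ are linear subspaces of $H^{2}(E)$, each is contained in $K_{\Theta}$, and each has dimension $d=\dim E$---and I would verify them in that order. Throughout I would exploit the inner identity $\Theta^{*}\Theta=I_{E}$ a.e.\ on $\mathbb{T}$, together with the pairing $\langle x,g\rangle_{H^{2}(E)}=\langle x,g(0)\rangle_{E}$ valid whenever $x\in E$ is a constant and $g\in H^{2}(E)$.

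The subspace assertion is automatic because $D$ and $D_{*}$ are the images of the linear maps $\phi,\phi_{*}\colon E\to H^{2}(E)$ given by $\phi(x)=x-\Theta(z)\Theta(0)^{*}x$ and $\phi_{*}(x)=z^{-1}(\Theta(z)-\Theta(0))x$; the latter lies in $H^{2}(E)$ because $(\Theta-\Theta(0))x$ vanishes at the origin, or equivalently because $\phi_{*}(x)=S^{*}(\Theta(z)x)$.

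For the inclusion $D,D_{*}\subset K_{\Theta}$ I would show orthogonality to every $\Theta h$ with $h\in H^{2}(E)$. For $D$, the two summands of $\langle \phi(x),\Theta h\rangle=\langle x,\Theta h\rangle-\langle \Theta\Theta(0)^{*}x,\Theta h\rangle$ both reduce to $\langle \Theta(0)^{*}x,h(0)\rangle_{E}$---the first by the constant-versus-$H^{2}$ pairing above, the second after cancelling $\Theta^{*}\Theta=I$ and using the same pairing---so the difference vanishes. For $D_{*}$, writing $\phi_{*}(x)=S^{*}(\Theta x)$, I would transfer $S^{*}$ to $S$ on the other side and compute $\langle S^{*}(\Theta x),\Theta h\rangle=\langle \Theta x,z\Theta h\rangle=\langle x,zh\rangle=0$, the last equality because $zh$ vanishes at the origin.

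It then remains to show that $\phi$ and $\phi_{*}$ are injective, which yields $\dim D=\dim D_{*}=d$. For $\phi_{*}$, vanishing of $\phi_{*}(x)$ forces $\Theta(z)x\equiv\Theta(0)x$, so $\Theta(\cdot)x$ is a constant $H^{2}(E)$-function; taking boundary values gives $\|\Theta(0)x\|=\|x\|$, and under the standing non-degeneracy of $\Theta$ (no unitary constant direction) this forces $x=0$. For $\phi$, evaluation at $z=0$ reduces the vanishing to $\Theta(0)\Theta(0)^{*}x=x$, whence $\|x\|\le\|\Theta(0)\|^{2}\|x\|$ and $x=0$ under the same hypothesis. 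This injectivity step is the only delicate piece of the argument; the orthogonality computations are routine consequences of the inner property of $\Theta$ and the constancy of $x$.
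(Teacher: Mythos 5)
Your argument is correct, but note that this note does not actually prove the lemma --- it is quoted from \cite{KT} --- so there is no in-paper proof to compare against. Your route (linearity via the maps $\phi,\phi_{*}$; membership in $K_{\Theta}$ by pairing against $\Theta h$ using $\Theta^{*}\Theta=I$ a.e.\ on $\mathbb{T}$ together with the constant-versus-$H^{2}$ pairing; dimension via injectivity) is the standard one and all the computations check out. The one point you should pin down is the hypothesis behind the injectivity step: ``non-constant,'' which is all this note assumes of $\Theta$, is \emph{not} sufficient in the matrix case. For $\Theta(z)=\mathrm{diag}(z,1)$ on $E=\mathbb{C}^{2}$ one has $(I-\Theta(z)\Theta(0)^{*})e_{2}=0$ and $z^{-1}(\Theta(z)-\Theta(0))e_{2}=0$, so $\dim D=\dim D_{*}=1<2$. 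What is actually needed (and what \cite{KT} assumes) is that $\Theta(0)$ be a strict contraction, i.e.\ $\|\Theta(0)x\|<\|x\|$ for all $x\neq 0$; your ``no unitary constant direction'' is exactly this condition in disguise, and with it both of your injectivity arguments go through (the bound $\|x\|\le\|\Theta(0)\|^{2}\|x\|$ in the $\phi$ case likewise uses $\|\Theta(0)\|<1$). So you correctly isolated the delicate point --- just replace the vague ``standing non-degeneracy'' by the explicit purity hypothesis on $\Theta(0)$, since the statement as reproduced here omits it.
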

 \begin{corollary}
 The subsets
 $$\widetilde{D}=\bigg{\{(I-\widetilde{\Theta}(z)\Theta(0))x: x\in E}\bigg\}, \quad \widetilde{D}_{*}=\bigg{\{\frac{1}{z}(\widetilde{\Theta}(z)-\widetilde{\Theta}(0))x:x\in E}\bigg\},$$
are subspaces of $K_{\widetilde{\Theta}}$.
 \end{corollary}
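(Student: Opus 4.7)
The plan is to derive this corollary as a direct application of the preceding lemma, rather than repeating its argument. Since the text already records that $\widetilde{\Theta}$ is itself an inner function (in $H^\infty(\mathcal{L}(E))$) whenever $\Theta$ is, the lemma is available with $\widetilde{\Theta}$ playing the role of $\Theta$, and the corresponding model space is $K_{\widetilde{\Theta}}$ by definition.

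Applying the lemma verbatim to $\widetilde{\Theta}$ yields that
$$\{(I-\widetilde{\Theta}(z)\widetilde{\Theta}(0)^{*})x : x\in E\} \quad\text{and}\quad \{\tfrac{1}{z}(\widetilde{\Theta}(z)-\widetilde{\Theta}(0))x : x\in E\}$$
are subspaces of $K_{\widetilde{\Theta}}$ of dimension $\dim E$. The second subspace is already exactly $\widetilde{D}_{*}$ as stated, so nothing more is needed there.

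For the first subspace, the only thing to verify is the identification of the scalar data at the origin. The key step is the elementary computation $\widetilde{\Theta}(0) = \Theta(\overline{0})^{*} = \Theta(0)^{*}$, from which it follows that $\widetilde{\Theta}(0)^{*} = \Theta(0)$. Substituting this into $I-\widetilde{\Theta}(z)\widetilde{\Theta}(0)^{*}$ converts the expression into $I-\widetilde{\Theta}(z)\Theta(0)$, which is precisely the defining formula for $\widetilde{D}$.

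There is no real obstacle: the content of the corollary is entirely contained in the lemma, and the work reduces to the bookkeeping identity $\widetilde{\Theta}(0)^{*}=\Theta(0)$, together with the observation that the tilde operation preserves innerness. I would therefore present the proof as a single short paragraph invoking the lemma and recording this identification.
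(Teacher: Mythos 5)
Your proof is correct and is exactly the intended argument: the paper offers no separate proof of the corollary precisely because it follows from applying the lemma to the inner function $\widetilde{\Theta}$, together with the identity $\widetilde{\Theta}(0)^{*}=\Theta(0)$ coming from $\widetilde{\Theta}(z)=\Theta(\overline{z})^{*}$. Nothing is missing.
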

  \begin{theorem}\cite{Fu}
 We have
 $$ (S_{\Theta}^{*}f)(z)=\left\{
                     \begin{array}{ll}
                       \frac{f(z)}{z} ~~~ ~~~~~~~~~~~~~~~~~~~~~~~~~~~~~~~~ for ~f\in D^{\perp}, \\
                       -\frac{1}{z}\big(\Theta(z)-\Theta(0)\big)\Theta(0)^{*}x ~~~~~~~~~ for ~f=\big(I-\Theta(z)\Theta(0)^{*}\big)x\in D;
                     \end{array}
                   \right.$$
$$ (S_{\Theta}f)(z)=\left\{
                     \begin{array}{ll}
                       zf(z) ~~~ ~~~~~~~~~~~~~~~~~~~~~~~~~~~~~for ~f\in D_{*}^{\perp}, \\
                       -\big(I-\Theta(z)\Theta(0)^{*}\big)\Theta(0)x ~~~~~~~~~ for ~f=\frac{1}{z}\big(\Theta(z)-\Theta(0)\big)x\in D_{*}.
                     \end{array}
                   \right.$$
 \end{theorem}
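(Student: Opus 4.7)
The plan is to verify the four cases by direct computation, working from the basic definitions $S_{\Theta}^{*}=S^{*}|_{K_{\Theta}}$ and $S_{\Theta}f=P_{\Theta}(zf)$, together with a careful reading of what $D^{\perp}$ and $D_{*}^{\perp}$ mean inside $K_{\Theta}$.

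First I would establish the key fact that $(I-\Theta(z)\Theta(0)^{*})y$ is the reproducing kernel at $0$ in $K_{\Theta}$ in direction $y\in E$. A one-line computation shows
\[
\langle f,(I-\Theta(z)\Theta(0)^{*})y\rangle=\langle f,y\rangle-\langle f,\Theta(z)\Theta(0)^{*}y\rangle=\langle f(0),y\rangle
\]
for $f\in K_{\Theta}$, since the second term vanishes by $f\perp\Theta H^{2}(E)$. Consequently $D^{\perp}\cap K_{\Theta}$ coincides with $\{f\in K_{\Theta}:f(0)=0\}$. With this in hand, the $S_{\Theta}^{*}$ formulas are immediate: for $f\in D^{\perp}$ the identity $S^{*}f=(f-f(0))/z$ reduces to $f/z$; for $f=(I-\Theta(z)\Theta(0)^{*})x\in D$ one subtracts $f(0)=(I-\Theta(0)\Theta(0)^{*})x$ inside $(f-f(0))/z$ and the constant pieces cancel to leave $-\tfrac{1}{z}(\Theta(z)-\Theta(0))\Theta(0)^{*}x$.

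For $S_{\Theta}$, I would first characterize $D_{*}^{\perp}$ as $\{f\in K_{\Theta}:zf\in K_{\Theta}\}$. Since multiplication by $z$ keeps $f$ in $H^{2}(E)$, the issue is whether $zf\perp\Theta H^{2}(E)$. Writing an arbitrary $g\in H^{2}(E)$ as $g=g(0)+zh$, the pairing $\langle zf,\Theta g\rangle$ splits so that the $zh$-part vanishes because $f\in K_{\Theta}$, while the constant part gives $\langle f,\tfrac{1}{z}(\Theta(z)-\Theta(0))g(0)\rangle$. Thus $zf\in K_{\Theta}$ exactly when $f\perp D_{*}$, which handles the first branch $S_{\Theta}f=zf$. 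For $f=\tfrac{1}{z}(\Theta(z)-\Theta(0))x\in D_{*}$, the computation $zf=\Theta(z)x-\Theta(0)x$ combined with $P_{\Theta}(\Theta(z)x)=0$ reduces matters to $-P_{\Theta}(\Theta(0)x)$, and the reproducing-kernel observation from the first paragraph gives $P_{\Theta}(c)=(I-\Theta(z)\Theta(0)^{*})c$ for any constant $c\in E$, yielding the stated formula.

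The only genuine obstacle is the orthogonality characterization $D_{*}^{\perp}=\{f:zf\in K_{\Theta}\}$; the decomposition $g=g(0)+zh$ is what makes the argument work cleanly, and I would present it in that form rather than invoking duality through $(S_{\Theta}^{*})^{*}$, which would require additional bookkeeping with the already-established $S_{\Theta}^{*}$ formulas.
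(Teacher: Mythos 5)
Your argument is correct. Note that the paper itself gives no proof of this theorem --- it is quoted from Fuhrmann's paper \cite{Fu} --- so there is no internal proof to compare against; your write-up is a self-contained verification. The two pillars you identify are exactly the right ones: the identity $\langle f,(I-\Theta(z)\Theta(0)^{*})y\rangle=\langle f(0),y\rangle_{E}$ for $f\in K_{\Theta}$, which simultaneously shows $D^{\perp}=\{f\in K_{\Theta}:f(0)=0\}$ and $P_{\Theta}c=(I-\Theta(z)\Theta(0)^{*})c$ for constants $c$; and the characterization $D_{*}^{\perp}=\{f\in K_{\Theta}:zf\in K_{\Theta}\}$ via the splitting $g=g(0)+zh$ (where, in the pairing $\langle zf,\Theta g(0)\rangle=\langle f,\bar z\,\Theta(z)g(0)\rangle$, the discarded term $\bar z\,\Theta(0)g(0)$ is orthogonal to $H^{2}(E)$, as you implicitly use). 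With these, all four branches follow by the direct computations you give, and the algebra in each case checks out.
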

 For a fixed inner function $\Theta$ and $\Phi\in L^{2}(\mathcal{L}(E))$, the corresponding matrix valued truncated Toeplitz operator $A_{\Phi}:K_{\Theta}\longrightarrow K_{\Theta}$ is the densely defined operator on $K_{\Theta}^{\infty}=H^{\infty}\cap K_{\Theta}$ which acts by the formula
$$A_{\Phi}f=P_{\Theta}(\Phi f).$$
The function $\Phi$ here is called the symbol of $A_{\Phi}$.

The spaces of matrix valued truncated Toeplitz operators on $K_{\Theta}$ and $K_{\widetilde{\Theta}}$ are denoted by $\mathcal{T}_{\Theta}$ and  $\mathcal{T}_{\widetilde{\Theta}}$ respectively. For the inner functions $\Theta_{1}$ and $\Theta_{2}$, we say that $\mathcal{I}_{\Theta_{1}}$ and
 $\mathcal{T}_{\Theta_{2}}$ are spatially isomorphic if there is a unitary operator $\tau: K_{\Theta_{1}}\longrightarrow K_{\Theta_{2}}$ such that
$\tau \mathcal{T}_{\Theta_{1}}=\mathcal{T}_{\Theta_{2}}\tau$.\par

 In  \cite{KT} (see Theorem 4.2.1) a characterization of matrix valued truncated Toeplitz operators (MTTOs) by shift invariance is obtained. For a bounded operator on $K_{\Theta}$ we say that $A$ is shift invariant if
\begin{equation}
f,Sf\in K_{\Theta}~~~~~Q_{A}(f)=Q_{A}(Sf);
\end{equation}
where $Q_{A}$ is the associated quadratic form on $K_{\Theta}$, defined by $Q_{A}(f)=\langle Af,f\rangle$ for $f\in K_{\Theta}$.
\begin{theorem}\cite{KT}
	A bounded operator $A$ on $K_{\Theta}$ belongs to $\mathcal{T}_{\Theta}$ if and only if $A$ is shift invariant.
\end{theorem}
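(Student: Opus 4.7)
The plan is to prove the two directions separately, with the converse as the main substance. The forward direction reduces to a one-line computation using that $z$ has modulus one on the torus, while the converse requires constructing a symbol from the shift-invariance data.

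For the ``only if'' direction, take $f, Sf \in K_\Theta$ and $\Phi \in L^2(\mathcal{L}(E))$. Since $Sf \in K_\Theta$, the projection $P_\Theta$ drops out of the pairing with $Sf$:
\begin{equation*}
Q_{A_\Phi}(Sf) = \langle P_\Theta(\Phi \cdot zf), zf\rangle = \langle \Phi \cdot zf, zf\rangle_{L^2(E)} = \int_\mathbb{T} \langle \Phi(e^{it}) zf, zf\rangle_E \, dm(t).
\end{equation*}
Because $|z|=1$ on $\mathbb{T}$, the integrand reduces to $\langle \Phi f, f\rangle_E$ pointwise a.e., yielding $Q_{A_\Phi}(Sf) = Q_{A_\Phi}(f)$.

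For the converse, suppose $A$ is shift invariant, and polarize $Q_A$ to the sesquilinear form $\phi_A(f,g) = \langle Af, g\rangle$. The shift-invariance identity polarizes to $\phi_A(Sf, Sg) = \phi_A(f,g)$ whenever $f, g, Sf, Sg \in K_\Theta$. My plan is to use this invariance to extend $\phi_A$ consistently to shifted pairs $(z^k f, z^k g)$, and by linearity to a bounded sesquilinear form $\widehat{\phi}_A$ on a dense subspace of $L^2(E)\times L^2(E)$. A representation theorem for bounded sesquilinear forms on $L^2(E)$ with operator-valued kernels then produces $\Phi \in L^2(\mathcal{L}(E))$ with $\widehat{\phi}_A(f, g) = \langle \Phi f, g\rangle_{L^2(E)}$. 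Restricting to $f, g \in K_\Theta^\infty$ recovers $\langle A f, g\rangle = \langle P_\Theta(\Phi f), g\rangle$, so $A = A_\Phi \in \mathcal{T}_\Theta$.

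I expect the main obstacle to be the extension and representation step: one has to check that iterating the rule $\phi_A(Sf, Sg) = \phi_A(f,g)$ yields a well-defined bounded form on a dense subspace of the ambient $L^2(E)$, and that the resulting symbol can be chosen in $L^2(\mathcal{L}(E))$ rather than in a weaker class. The matrix-valued setting adds complications over Sarason's scalar argument: noncommutativity forces one to work with the Hilbert--Schmidt structure on $\mathcal{L}(E)$ when invoking the representation theorem, and the scalar $H^2/H^2_0$ duality must be replaced by its $E$-valued counterpart throughout.
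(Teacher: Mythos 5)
First, a point of order: the paper does not prove this theorem at all --- it is quoted from \cite{KT} (Theorem 4.2.1 there), so there is no in-paper proof to compare against; I assess your proposal against the known argument (Sarason's Theorem 8.1 and its matrix-valued analogue in \cite{KT}). Your ``only if'' direction is correct and is essentially the standard computation: for $f,Sf\in K_\Theta$ the projection drops out of both pairings and $|e^{it}|=1$ gives $\langle \Phi zf,zf\rangle_{L^2(E)}=\langle \Phi f,f\rangle_{L^2(E)}$.

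The converse, however, has a genuine gap, and I do not believe the route you sketch can be repaired. Two problems. (1) The extension step is not available: shift invariance is only hypothesized for $f$ with $f,Sf\in K_\Theta$, i.e.\ on the codimension-$d$ subspace $D_*^{\perp}$ of $K_\Theta$; iterating $f\mapsto Sf$ immediately leaves $K_\Theta$, where $A$ is undefined, so the rule $\widehat{\phi}_A(z^kf,z^kg):=\phi_A(f,g)$ has no consistency mechanism, and the span of such shifted pairs lands in $H^2(E)$, not densely in $L^2(E)$. (2) More fatally, if your ``representation theorem for bounded sesquilinear forms'' produced a multiplication operator representing a form that is \emph{bounded} on a dense subspace of $L^2(E)\times L^2(E)$, the resulting symbol would necessarily lie in $L^{\infty}(\mathcal{L}(E))$, not merely $L^2$; but it is known (already in the scalar case, by Baranov--Chalendar--Fricain--Mashreghi--Timotin) that bounded truncated Toeplitz operators need not admit bounded symbols, so the extended form cannot be bounded and the representation theorem does not apply. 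The actual proof goes in the opposite direction of ``globalizing'': one polarizes $Q_A(S_\Theta f)=Q_A(f)$ on $D_*^{\perp}$ to conclude that the compression of $A-S_\Theta^* A S_\Theta$ to $D_*^{\perp}$ vanishes, hence $A-S_\Theta A S_\Theta^{*}$ is a finite-rank operator (rank at most $2d$) supported on the defect spaces $D$ and $D_*$ of the paper's Lemma 1, and then one assembles the symbol $\Phi$ \emph{explicitly} from the finitely many $K_\Theta$-functions appearing in that finite-rank operator --- which is precisely why the symbol is only in $L^2(\mathcal{L}(E))$. Your proposal is missing this finite-rank reduction, which is the heart of the theorem.
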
\par

\section{ Main Result}

Our main result shows that the unitary $ \tau $ also implements a spatial isomorphism between the corresponding spaces of matrix valued truncated Toeplitz operators.

\begin{theorem}(Spatial Isomorphism Theorem)
$\mathcal{T}_{\Theta}$ and $\mathcal{T}_{\widetilde{\Theta}}$ are spatially isomorphic, that is there exist a unitary operator $\tau: K_{\Theta}\longrightarrow K_{\widetilde{\Theta}}$ (as defined in \ref{eq: def of tau} ) such that $\tau \mathcal{T}_{\Theta}=\mathcal{T}_{\widetilde{\Theta}} \tau$.
\end{theorem}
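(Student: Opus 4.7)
The plan is to invoke the shift invariance characterization of MTTOs recalled above: a bounded operator $B$ on $K_{\widetilde{\Theta}}$ lies in $\mathcal{T}_{\widetilde{\Theta}}$ if and only if $Q_{B}(g)=Q_{B}(Sg)$ whenever $g,Sg\in K_{\widetilde{\Theta}}$. Given $A\in\mathcal{T}_{\Theta}$, I would set $B=\tau A\tau^{*}$ and verify that $B$ is shift invariant on $K_{\widetilde{\Theta}}$; this yields the inclusion $\tau\mathcal{T}_{\Theta}\tau^{*}\subseteq\mathcal{T}_{\widetilde{\Theta}}$, and the reverse inclusion follows by symmetry since $\widetilde{\widetilde{\Theta}}=\Theta$ and the $\tau$-operator corresponding to $\widetilde{\Theta}$ is precisely $\tau^{*}$.

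The key ingredient is the intertwining $\tau S_{\Theta}=S_{\widetilde{\Theta}}^{*}\tau$ underlying Fu's unitary equivalence theorem, together with its adjoint form $\tau^{*}S_{\widetilde{\Theta}}=S_{\Theta}^{*}\tau^{*}$. Fix $g\in K_{\widetilde{\Theta}}$ with $Sg\in K_{\widetilde{\Theta}}$ and set $f=\tau^{*}g\in K_{\Theta}$. Since $Sg\in K_{\widetilde{\Theta}}$ implies $S_{\widetilde{\Theta}}g=Sg$, the intertwining gives $\tau^{*}Sg=S_{\Theta}^{*}f$. Comparing norms,
\[
\|f\|^{2}=\|g\|^{2}=\|Sg\|^{2}=\|\tau^{*}Sg\|^{2}=\|S_{\Theta}^{*}f\|^{2}=\|f\|^{2}-\|f(0)\|^{2},
\]
which forces $f(0)=0$; hence $S_{\Theta}^{*}f=f/z$ and $S(S_{\Theta}^{*}f)=f\in K_{\Theta}$.

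Setting $h=S_{\Theta}^{*}f$, the previous step shows $h,Sh\in K_{\Theta}$ with $Sh=f$. Shift invariance of $A$ therefore yields $Q_{A}(h)=Q_{A}(Sh)$, i.e.\ $Q_{A}(S_{\Theta}^{*}f)=Q_{A}(f)$. Translating back via the unitarity of $\tau$,
\[
Q_{B}(g)=\langle A\tau^{*}g,\tau^{*}g\rangle=Q_{A}(f)=Q_{A}(S_{\Theta}^{*}f)=\langle A\tau^{*}Sg,\tau^{*}Sg\rangle=Q_{B}(Sg),
\]
which is exactly the shift invariance of $B$. The main subtlety is that $\tau$ intertwines $S_{\Theta}$ with $S_{\widetilde{\Theta}}^{*}$ rather than with $S_{\widetilde{\Theta}}$, so the hypothesis ``$g,Sg\in K_{\widetilde{\Theta}}$'' does not translate to the analogous pair on $K_{\Theta}$ in the obvious way; the decisive observation is that this hypothesis automatically produces $f(0)=0$, which places the pair $(S_{\Theta}^{*}f,f)$ in exactly the form required by the shift invariance hypothesis for $A$.
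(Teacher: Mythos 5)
Your proof is correct and follows essentially the same route as the paper: both verify shift invariance of $\tau A\tau^{*}$ via the characterization theorem and the intertwining $\tau S_{\Theta}=S_{\widetilde{\Theta}}^{*}\tau$, and your norm computation forcing $f(0)=0$ is equivalent to the paper's observation that $\tau^{*}f\in D^{\perp}$ (for $f\in K_{\Theta}$, orthogonality to $D$ is exactly the vanishing of $f(0)$). The one genuine streamlining is your symmetry argument using $\widetilde{\widetilde{\Theta}}=\Theta$ and the fact that the $\tau$-operator for $\widetilde{\Theta}$ is $\tau^{*}$, where the paper instead repeats the entire quadratic-form computation for the reverse inclusion.
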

\begin{proof}
 Let $f\in \widetilde{D}^{\perp}_{*}$ then we have $\tau^{*}f\in D^{\perp}$.
It follows that $S_{\Theta}^{*}\tau^{*}f=\frac{\tau^{*}f}{z}\in D_{*}^{\perp}$ therefore $S_{\Theta}S_{\Theta}^{*}\tau^{*}f=\tau^{*}f$.
Now for $\Phi\in L^{2}(\mathcal{L}(E))$, consider $A_{\Phi}\in \mathcal{T}_{\Theta}$, then $\tau A_{\Phi}\tau^{*}\in \tau \mathcal{T}_{\Theta}\tau^{*}$ and using the fact $\tau S_{\Theta}=S_{\widetilde{\Theta}}^{*}\tau$ we have $\tau^{*}S_{\widetilde{\Theta}}=S_{\Theta}^{*}\tau^{*}$ and
\begin{align*}
Q_{\tau A_{\Phi}\tau^{*}}(Sf)
&=\langle \tau A_{\Phi}\tau^{*}S_{\widetilde{\Theta}}f,S_{\widetilde{\Theta}}f\rangle
=\langle A_{\Phi}\tau^{*}S_{\widetilde{\Theta}}f,\tau ^{*}S_{\widetilde{\Theta}}f \rangle\\
&=\langle A_{\Phi}S_{\Theta}^{*}\tau^{*}f,S_{\Theta}^{*}\tau^{*}f \rangle=\langle A_{\Phi}S_{\Theta}S_{\Theta}^{*}\tau^{*}f, S_{\Theta}S_{\Theta}^{*}\tau^{*}f \rangle\\
&=\langle A_{\Phi}\tau^{*}f, \tau^{*}f \rangle=\langle\tau A_{\Phi}\tau^{*}f, f \rangle\\
&=Q_{\tau A_{\Phi}\tau^{*}}(f).
\end{align*}
It follows that $\tau \mathcal{T}_{\Theta} \tau^{*}\subset \mathcal{T}_{\widetilde{\Theta}}$.

 For the other inclusion, let $f\in D^{\perp}_{*}$ then we have $S_{\Theta}f=zf$ and $zf\in \mathcal{D}^{\perp}$.
It follows that $S_{\Theta}^{*}S_{\Theta} f=f$. Now for $\Psi\in L^{2}(\mathcal{L}(E))$, consider $A_{\Psi}\in \mathcal{T}_{\widetilde{\Theta}}$, then $\tau^{*} A_{\Psi}\tau\in \tau^{*} \mathcal{T}_{\widetilde{\Theta}}\tau$ and using the fact $\tau S_{\Theta}=S_{\widetilde{\Theta}}^{*}\tau$ we have $S_{\widetilde{\Theta}}\tau=\tau S_{\Theta}^{*}$.
\begin{align*}
Q_{\tau^{*} A_{\Psi}\tau}(Sf)
&=\langle \tau^{*} A_{\Psi}\tau S_{\Theta}f,S_{\Theta}f\rangle\\
&=\langle A_{\Psi}\tau S_{\Theta}f,\tau S_{\Theta}f \rangle=\langle A_{\Psi}S_{\widetilde{\Theta}}\tau S_{\Theta}f,S_{\widetilde{\Theta}}\tau S_{\Theta}f \rangle\\
&=\langle A_{\Psi}\tau S_{\Theta}^{*} S_{\Theta}f,\tau S_{\Theta}^{*} S_{\Theta}f \rangle=\langle A_{\Psi}\tau f, \tau^{*}f \rangle\\
&=\langle\tau^{*} A_{\Psi}\tau f, f \rangle=Q_{\tau^{*} A_{\Psi}\tau}(f).
\end{align*}
It follows that $\tau^{*}\mathcal{T}_{\widetilde{\Theta}}\tau\subset \mathcal{T}_{\Theta}$. Therefore the equality follows from the two inclusions.
\end{proof}

\noindent\textbf{Acknowledgement}\textit{.}  The author is thankful to his family for continuous support. Furthermore the author is also thankful to Prof. Dan Timotin for their useful remarks.

\vskip 0,65 true cm

\medskip 

\smallskip\noindent
Received:  \\
Revised: \\
Accepted: 

\salt

\adresa{$^{(1)}$ Abdus Salam School of Mathematical Sciences. \\
E-mail: {\tt rewayat.khan@gmail.com}
}


\end{document}